\newtheorem{theorem}{Theorem}%[section]
\newtheorem{definition}[theorem]{Definition}
\newtheorem{lemma}[theorem]{Lemma}
\newtheorem{remark}[theorem]{Remark}
\title{Residuation in lattice effect algebras}
\author{Ivan~Chajda and Helmut~L\"anger}
\date{}
\begin{document}
\footnotetext[1]{Support of the research by \"OAD, project CZ~02/2019, and support of the research of the first author by IGA, project P\v rF~2019~015, is gratefully acknowledged.}
\maketitle
\begin{abstract}
We introduce the concept of a quasiresiduated lattice and prove that every lattice effect algebra can be organized into a commutative quasiresiduated lattice with divisibility. Also conversely, every such lattice can be converted into a lattice effect algebra and every lattice effect algebra can be reconstructed form its assigned quasiresiduated lattice. We apply this method also for lattice pseudoeffect algebras introduced recently by Dvure\v censkij and Vetterlein. We show that every good lattice pseudoeffect algebra can be organized into a (possibly non-commutative) quasiresiduated lattice with divisibility and conversely, every such lattice can be converted into a lattice pseudoeffect algebra. Moreover, also a good lattice pseudoeffect algebra can be reconstructed from the assigned quasiresiduated lattice.
\end{abstract}
 
{\bf AMS Subject Classification:} 03G25,03G12,06D35

{\bf Keywords:} lattice effect algebra, lattice pseudoeffect algebra, quasiresiduated lattice, quasiadjointness, divisibility

In order to axiomatize quantum logic effects in a Hilbert space, Foulis and Bennett (\cite{FB}) introduced the so-called effect algebras. These are partial algebras with one partial binary operation which can be converted into bounded posets in general and into lattices in particular cases. It turns out that effect algebras form a successful axiomatization of the logic of quantum mechanics, but we suppose that there exists a connection with a kind of substructural logics whose algebraic semantics is based on residuated lattices. An attempt in this direction was already done in \cite{CH} where the so-called conditional residuation was introduced. A disadvantage of this approach is that the axioms of residuated structures are reflected only in the case when the terms used in adjointness are defined. This is an essential restriction which prevents the development of this theory. The aim of the present paper is to introduce the more general concept of quasiresiduation and to show that lattice effect algebras and lattice pseudoeffect algebras satisfy this concept. Pseudoeffect algebras were introduced recently by Dvure\v censki and Vetterlein (\cite{DV}).

We start with the following definition.

\begin{definition}
An {\em effect algebra} is a partial algebra $\mathbf E=(E,+,{}',0,1)$ of type $(2,1,0,0)$ where $(E,{}',0,1)$ is an algebra and $+$ is a partial operation satisfying the following conditions for all $x,y,z\in E$:
\begin{enumerate}[{\rm(E1)}]
\item $x+y$ is defined if and only if so is $y+x$ and in this case $x+y=y+x$,
\item $(x+y)+z$ is defined if and only if so is $x+(y+z)$ and in this case $(x+y)+z=x+(y+z)$,
\item $x'$ is the unique $u\in E$ with $x+u=1$,
\item if $1+x$ is defined then $x=0$.
\end{enumerate}
\end{definition}

On $E$ a binary relation $\leq$ can be defined by
\[
x\leq y\text{ if there exists some }z\in E\text{ with }x+z=y
\]
($x,y\in E$). Then $(E,\leq,0,1)$ is a bounded poset and $\leq$ is called the {\em induced order} of $\mathbf E$. If $(E,\leq)$ is a lattice then $\mathbf E$ is called a {\em lattice effect algebra}.

In the sequel we will use the properties of effect algebras listed in the following lemma.

\begin{lemma}\label{lem1}
{\rm(}see {\rm\cite{DP},\cite{DV})} If $\mathbf E=(E,+,{}',0,1)$ is an effect algebra, $\leq$ its induced order and $a,b,c,d\in E$ then the following hold:
\begin{enumerate}[{\rm(i)}]
\item $a''=a$,
\item $a\leq b$ implies $b'\leq a'$,
\item $a+b$ is defined if and only if $a\leq b'$,
\item if $a\leq b$ and $b+c$ is defined then $a+c$ is defined and $a+c\leq b+c$,
\item if $a\leq b$ then $a+(a+b')'=b$,
\item $a+0=0+a=a$,
\item $0'=1$ and $1'=0$.
\end{enumerate}
\end{lemma}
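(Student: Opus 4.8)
The plan is to establish the seven items one after another, each time from the axioms (E1)--(E4) together with the items already proved. Two mechanisms do essentially all of the work: the uniqueness clause in (E3), which lets us conclude $u=v$ whenever $x+u=1=x+v$; and the rebracketing and commuting of sums permitted by (E1) and (E2) --- but only once \emph{some} grouping of the sum in question is already known to be defined. Since $+$ is partial, the genuine content of the lemma lies in this definedness bookkeeping, and the linchpin is item (iii), which I would prove early so that it can afterwards serve as the sole criterion for when a sum exists.

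I would begin with (i): $a+a'=1$ gives $a'+a=1$ by (E1), and since $a''$ is the unique solution of $a'+u=1$, uniqueness forces $a''=a$. For (ii), from $a+z=b$ one expands $1=b+b'=(a+z)+b'$, rebrackets by (E2) to $a+(z+b')=1$, and reads off $z+b'=a'$ by uniqueness, so $b'\le a'$. Item (iii) then follows by the same expand--rebracket--uniqueness pattern in both directions, using (i) and (ii): if $a+b$ is defined then $b+(a+b)'=a'$, whence $b\le a'$ and hence $a\le b'$; conversely, if $a\le b'$ one rewrites $b+b'=1$ so that $(b+a)+z$ is seen to be defined, hence so is $a+b$. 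With (iii) in hand, (iv) is a pure rebracketing of $b+c=(a+z)+c=(a+c)+z$, where (iii) is invoked once to guarantee that $a+c$ exists. Only at (vii) does (E4) finally enter: $1$ possesses a complement, so $1+1'$ is defined and (E4) yields $1'=0$, whereupon $0+1=1$ identifies $0'$ as $1$ by uniqueness. For (v), (iii) first guarantees that $a+b'$ is defined; writing $c=(a+b')'$ and applying (E2) twice gives $b'+(a+c)=1$, so by uniqueness and (i) one gets $a+c=b$. Finally (vi) is obtained by specializing (v): since $a\le 1$ (witnessed by $a'$), item (v) with $b=1$ together with $1'=0$ from (vii) gives $a+(a+0)'=1$, so $(a+0)'$ is the complement of $a$, hence $a+0=a$ by (i), and $0+a=a$ follows from (E1).

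The main obstacle is not any hard computation but keeping the partiality under control: (E1) and (E2) may only be applied after one grouping has been certified defined, so the whole argument hinges on proving (iii) up front and routing every later definedness claim through it. A secondary subtlety is (vi), where one cannot show $0\le a$ directly without already knowing $0+a=a$; the proof therefore has to detour through (v) applied at the top element rather than argue about $0$ straight away.
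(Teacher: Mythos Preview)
The paper does not give its own proof of this lemma; it is quoted from \cite{DP} and \cite{DV} without argument. So there is nothing in the paper to compare your proposal against.

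Your outline is correct and complete. The order you choose --- (i), (ii), (iii) first, then (iv), (vii), (v), (vi) --- is the natural one, and your point that (iii) becomes the workhorse for all later definedness checks is exactly right. Two small remarks. In (iv) you do not actually need to invoke (iii) separately: once $(a+z)+c$ is known to be defined, (E2) and (E1) give $(a+c)+z$ defined directly, so $a+c$ is defined as a subterm; your rebracketing already does all the work. In (v) what you call ``applying (E2) twice'' is really (E1) once (to turn $a+b'$ into $b'+a$) followed by (E2) once; the conclusion $b'+(a+c)=1$ and hence $a+c=b''=b$ is then exactly as you say. Neither of these affects correctness.
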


A {\em partial monoid} is a partial algebra $\mathbf A=(A,\odot,1)$ of type $(2,0)$ where $1\in A$ and $\odot$ is a partial operation satisfying the following conditions for all $x,y,z\in A$:
\begin{enumerate}[{\rm(i)}]
\item $(x\odot y)\odot z$ is defined if and only if so is $x\odot(y\odot z)$ and in this case $(x\odot y)\odot z=x\odot(y\odot z)$,
\item $x\odot1=1\odot x=x$.
\end{enumerate}
The {\em partial monoid} $\mathbf A$ is called {\em commutative} if it satisfies the following condition for all $x,y\in A$:
\begin{enumerate}
\item[(iii)] $x\odot y$ is defined if and only if so is $y\odot x$ and in this case $x\odot y=y\odot x$,
\end{enumerate}

The authors already introduced a certain modification of residuation for sectionally pseudocomplemented lattices, see \cite{CL}. For lattice effect algebras, we introduce another version of residuation called quasiresiduation.

\begin{definition}\label{def1}
A {\em commutative quasiresiduated lattice} is a partial algebra $\mathbf C=(C,\vee,\wedge,$ $\odot,\rightarrow,0,1)$ of type $(2,2,2,2,0,0)$ where $(C,\vee,\wedge,0,1)$ is a bounded lattice, $\odot$ is a partial and $\rightarrow$ a full operation satisfying the following conditions for all $x,y,z\in C$:
\begin{enumerate}[{\rm(C1)}]
\item $(C,\odot,1)$ is a partial commutative monoid where $x\odot y$ is defined if and only if $x'\leq y$,
\item $x''=x$, and $x\leq y$ implies $y'\leq x'$,
\item $(x\vee y')\odot y\leq y\wedge z$ if and only if $x\vee y'\leq y\rightarrow z$.
\end{enumerate}
Here $x'$ is an abbreviation for $x\rightarrow0$. The {\em commutative quasiresiduated lattice} $\mathbf C$ is called {\em divisible} if
\[
x\leq y\text{ implies }y\odot(y\rightarrow x)=x
\]
for all $x,y\in C$.
\end{definition}

Note that the terms in (C3) are everywhere defined.

In case $y'\leq x$ and $z\leq y$ condition (C3) reduces to
\[
x\odot y\leq z\text{ if and only if }x\leq y\rightarrow z
\]
which is usual adjointness. Therefore condition (C3) will be called {\em commutative quasiadjointness}. Hence, contrary to the similar concept in \cite{CH}, in commutative quasiadjointness we have only everywhere defined terms in $\mathbf C$ although $\mathbf C$ is a partial algebra.

Our aim is to show that every lattice effect algebra can be organized into a commutative quasiresiduated lattice.

\begin{theorem}\label{th1}
Let $\mathbf E=(E,+,{}',0,1)$ be a lattice effect algebra with lattice operations $\vee$ and $\wedge$ and put
\begin{align*}
      x\odot y & :=(x'+y')'\text{ if and only if }x'\leq y, \\
x\rightarrow y & :=(x\wedge y)+x'
\end{align*}
{\rm(}$x,y\in E${\rm)}. Then $\mathbb C(\mathbf E):=(E,\vee,\wedge,\odot,\rightarrow,0,1)$ is a divisible commutative quasiresiduated lattice.
\end{theorem}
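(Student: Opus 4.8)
The plan is to check the clauses of Definition~\ref{def1} one by one, using throughout the effect-algebra axioms (E1)--(E4) and Lemma~\ref{lem1}; everything is routine except the quasiadjointness condition~(C3), which I would reduce to a short statement about the partial subtraction of $\mathbf E$.

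\emph{Preliminaries and the easy clauses.} First I would record that $(E,\vee,\wedge,0,1)$ is a bounded lattice by hypothesis, and that $x\rightarrow y=(x\wedge y)+x'$ is always defined, since $x\wedge y\leq x=x''$ and Lemma~\ref{lem1}(iii) applies; so $\rightarrow$ is a full operation. Moreover $x\rightarrow 0=(x\wedge 0)+x'=0+x'=x'$ by Lemma~\ref{lem1}(vi), so the abbreviation $x'$ in Definition~\ref{def1} coincides with the given unary operation of $\mathbf E$; hence (C2) is exactly Lemma~\ref{lem1}(i),(ii). For (C1): by Lemma~\ref{lem1}(i),(iii), $x\odot y=(x'+y')'$ is defined iff $x'+y'$ is defined iff $x'\leq y$, as required; commutativity of $\odot$ and the coincidence of the domains of $x\odot y$ and $y\odot x$ follow from (E1); $x\odot 1=(x'+1')'=(x'+0)'=x''=x$ by Lemma~\ref{lem1}(vii),(vi),(i), and symmetrically $1\odot x=x$; and since $x\odot y$ defined implies $(x\odot y)'=x'+y'$, the associativity of $\odot$ together with the matching of the respective domains of definition reduce to (E2) via Lemma~\ref{lem1}(iii).

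\emph{The tool for (C3).} For $b\leq a$ write $a\ominus b$ for the unique element with $b+(a\ominus b)=a$; it exists and equals $(b+a')'$ by Lemma~\ref{lem1}(v). I would use two consequences of (E1),(E2) and Lemma~\ref{lem1}: (a) if $c+d$ is defined then $(c+d)'=d'\ominus c$; and (b) for fixed $y$ the map $p\mapsto y\ominus p$ is an order-reversing involution of the interval $[0,y]$. I would also use that $x\mapsto x'$, being an order-reversing involution of the lattice, sends $\vee$ to $\wedge$, so $(x\vee y')'=x'\wedge y$. Now put $a:=x\vee y'$; then $a'=x'\wedge y\leq y$, so $(x\vee y')\odot y$ is automatically defined. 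By (a), $(x\vee y')\odot y=(a'+y')'=y\ominus a'$ and $y\rightarrow z=(y\wedge z)+y'=(y\ominus(y\wedge z))'$. Hence the left-hand side of (C3) becomes $y\ominus a'\leq y\wedge z$, while the right-hand side $x\vee y'\leq y\rightarrow z$ becomes $a\leq(y\ominus(y\wedge z))'$, equivalently $y\ominus(y\wedge z)\leq a'$ by Lemma~\ref{lem1}(i),(ii). Since $y\ominus a'$, $y\wedge z$ and $a'$ all lie in $[0,y]$, applying the order-reversing involution from (b) turns the first inequality into exactly the second. This settles (C3). I expect this reduction --- spotting that both $x\rightarrow y$ and $x\odot y$ admit clean ``$\ominus$-normal forms'' and that $y\ominus(\cdot)$ is an involution on $[0,y]$ --- to be the main obstacle; once it is in place, (C3) is immediate.

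\emph{Divisibility.} Finally, if $x\leq y$ then $y\rightarrow x=(y\wedge x)+y'=x+y'$, so by (a), $y\odot(y\rightarrow x)=(y'+(x+y')')'=(y'+(y\ominus x))'$; since $x+(y\ominus x)=y$ we get $x+(y'+(y\ominus x))=y+y'=1$, whence $y'+(y\ominus x)=x'$ and therefore $y\odot(y\rightarrow x)=x''=x$. This completes the verification.
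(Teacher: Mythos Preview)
Your proof is correct. For (C3) you and the paper rely on the same two ingredients from Lemma~\ref{lem1}, namely (iv) (monotonicity of $+$) and (v) (the cancellation identity); the difference is one of packaging. You introduce the partial difference $\ominus$, rewrite both $(x\vee y')\odot y$ and $y\rightarrow z$ as $y\ominus(\cdot)$ expressions, and then appeal once to the fact that $p\mapsto y\ominus p$ is an order-reversing involution of $[0,y]$. The paper instead proves the two implications of (C3) separately and by direct computation: for the forward direction it expands $a\vee b'=b'+((a\vee b')'+b')'$ via Lemma~\ref{lem1}(v) and then uses monotonicity of $+$; for the converse it applies monotonicity first and then (v). Your version makes the symmetry between the two directions visible and yields a one-line argument for (C3) once the $\ominus$-lemmas are in place; the paper's version is shorter overall because it avoids that setup. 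The divisibility argument is essentially the same in both: the paper writes $b\odot(b\rightarrow a)=(b'+(a+b')')'=a''=a$ in one line, which is exactly your computation with the intermediate $\ominus$-step suppressed.
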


\begin{proof}
Let $a,b,c\in E$. Obviously, $(E,\vee,\wedge,0,1)$ is a bounded lattice and (C1) and (C2) hold. If $(a\vee b')\odot b\leq b\wedge c$ then $((a\vee b')'+b')'\leq b\wedge c$ and $b'\leq a\vee b'$ and hence
\[
a\vee b'=b'+((a\vee b')'+b')'\leq b'+(b\wedge c)=b\rightarrow c.
\]
If, conversely, $a\vee b'\leq b\rightarrow c$ then $a\vee b'\leq(b\wedge c)+b'$ and $b'\leq(b\wedge c)'$ and hence
\[
(a\vee b')\odot b=(b'+(a\vee b')')'\leq(b'+((b\wedge c)+b')')'=(b\wedge c)''=b\wedge c
\]
proving (C3). If $a\leq b$ then $b'\leq a'$ and hence
\[
b\odot(b\rightarrow a)=(b'+(a+b')')'=a''=a
\]
proving divisibility.
\end{proof}

\begin{remark}
Let us mention that Definition~\ref{def1} can be modified in such a way that it contains only everywhere defined operations. Namely, if we put
\[
x\otimes y:=(x\vee y')\odot y
\]
for all $x,y\in C$ then $\otimes$ is everywhere defined and satisfies the identities $x\otimes1\approx1\otimes x\approx x$, and commutative quasiadjointness can then be expressed in the form
\[
x\otimes y\leq y\wedge z\text{ if and only if }x\vee y'\leq y\rightarrow z.
\]
This means that our definition of commutative quasiresiduation differs from that of usual residuation only in the point that $y$ occurs on the right-hand side of $x\otimes y\leq y\wedge z$ and $y'$ on the left-hand side of $x\vee y'\leq y\rightarrow z$. On the other hand, using this version, divisibility cannot be easily defined. Moreover, since in lattice effect algebras we have
\[
y\rightarrow z=(y\wedge z)+y'=y\rightarrow(y\wedge z),
\]
commutative quasiadjointness can be rewritten in the form
\[
(x\vee y')\odot y\leq y\wedge z\text{ if and only if }x\vee y'\leq y\rightarrow(y\wedge z)
\]
which corresponds to usual adjointness if we abbreviate $x\vee y'$ by $X$ and $y\wedge z$ by $Z$, i.e.
\[
X\odot y\leq Z\text{ if and only if }X\leq y\rightarrow Z.
\]
\end{remark}

We can prove also the converse.

\begin{theorem}
Let $\mathbf C=(C,\vee,\wedge,\odot,\rightarrow,0,1)$ be a commutative quasiresiduated lattice and put
\begin{align*}
x+y & :=(x'\odot y')'\text{ if and only if }x\leq y', \\
 x' & :=x\rightarrow0
\end{align*}
{\rm(}$x,y\in C${\rm)}. Then $\mathbb E(\mathbf C):=(C,+,{}',0,1)$ is a lattice effect algebra whose order coincides with that in $\mathbf C$.
\end{theorem}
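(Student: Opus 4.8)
The plan is to verify the effect-algebra axioms (E1)--(E4) for $\mathbb E(\mathbf C)$ one by one, reading each of them as a statement about the partial monoid $(C,\odot,1)$ and about quasiadjointness, and then to deduce the coincidence of orders from Lemma~\ref{lem1}(iii). First I would note that, by (C1) together with $x''=x$, the element $x'\odot y'$ is defined if and only if $x\leq y'$, which is exactly the domain condition imposed on $+$ (so the definition of $+$ is well posed); and since $a''=a$, the definition $x+y=(x'\odot y')'$ yields the bookkeeping identity $(x+y)'=x'\odot y'$ whenever $x+y$ is defined. With this, (E1) is immediate: $x+y$ is defined iff $x\leq y'$ iff $y\leq x'$ (by (C2)) iff $y+x$ is defined, and then $x+y=(x'\odot y')'=(y'\odot x')'=y+x$ by commutativity of $\odot$. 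For (E2), using $(x+y)'=x'\odot y'$ and (C1) one sees that ``$(x+y)+z$ is defined'' unwinds to ``$(x'\odot y')\odot z'$ is defined'' and ``$x+(y+z)$ is defined'' to ``$x'\odot(y'\odot z')$ is defined''; these are equivalent by associativity of the partial monoid, and in that case both $+$-values equal $((x'\odot y')\odot z')'$.

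For (E3) I would first extract two auxiliary facts. Putting $x=z=0$ in (C3) and simplifying with $0\vee y'=y'$, $y\wedge0=0$ and $y\rightarrow0=y'$ gives $y'\odot y\leq0$, hence $y'\odot y=0$ for all $y$; taking $y=1$ and using the monoid law $1'\odot1=1'$ yields $1'=0$, whence $0'=(1')'=1''=1$ by (C2). Consequently $x+x'=(x'\odot x'')'=(x'\odot x)'=0'=1$, so $x'$ is a $+$-complement of $x$. For uniqueness, suppose $x+u=1$; then $x\leq u'$ (so $u\leq x'$), and applying $'$ to $(x'\odot u')'=1$ gives $x'\odot u'=1'=0$. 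Here I would apply (C3) with the substitution $(x,y,z)\mapsto(x',u',0)$: since $(u')'=u$ and $u'\rightarrow0=u$, it reads ``$(x'\vee u)\odot u'=0$ iff $x'\vee u\leq u$'', i.e.\ iff $x'\leq u$; as $u\leq x'$ makes $x'\vee u=x'$, the left-hand equality is precisely $x'\odot u'=0$, which holds, so $x'\leq u$ and therefore $u=x'$. Finally (E4) is immediate: if $1+x$ is defined then $1\leq x'$, so $x'=1$ and $x=x''=1'=0$.

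Thus $\mathbb E(\mathbf C)$ is an effect algebra, so Lemma~\ref{lem1}(iii) applies to it: $a+b$ is defined if and only if $a\leq_{\mathbb E(\mathbf C)}b'$, where $\leq_{\mathbb E(\mathbf C)}$ denotes the induced order. But by construction $a+b$ is defined if and only if $a\leq b'$, so $a\leq_{\mathbb E(\mathbf C)}b'$ iff $a\leq b'$ for all $a,b\in C$; since $b\mapsto b'$ maps $C$ onto $C$ (by (C2)), this shows that $\leq_{\mathbb E(\mathbf C)}$ and $\leq$ coincide on $C$. In particular $(C,\leq_{\mathbb E(\mathbf C)})$ is the lattice $(C,\vee,\wedge)$, so $\mathbb E(\mathbf C)$ is a lattice effect algebra with the stated order.

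The translations giving (E1), (E2) and (E4) are routine once the identity $(x+y)'=x'\odot y'$ is in place; the step that really uses the strength of quasiadjointness -- and the main obstacle -- is the uniqueness clause of (E3), where one must hit on the substitution $(x,y,z)\mapsto(x',u',0)$ in (C3) to turn the hypothesis $x'\odot u'=0$ into $x'\leq u$. The auxiliary identities $y'\odot y=0$ and $1'=0$, obtained from similarly chosen instances of (C3) together with the monoid laws, are the other small points that require a little ingenuity rather than mere calculation.
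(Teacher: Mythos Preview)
Your proof is correct and follows essentially the same approach as the paper's own proof. The paper dispatches (E1), (E2) and (E4) with ``it is easy to see'' while you spell them out, and the core (E3) argument is the same: both you and the paper obtain $y'\odot y=0$ from the instance $x=z=0$ of (C3), and both obtain the uniqueness of the complement by applying (C3) with $z=0$ to the equation $a\odot b=0$ (your substitution $(x,y,z)\mapsto(x',u',0)$ is the paper's argument with the roles of the two factors swapped via commutativity); the order-coincidence step is likewise identical, the paper using Lemma~\ref{lem1}(iii) implicitly where you cite it.
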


\begin{proof}
Let $a,b\in C$. It is easy to see that (E1), (E2) and (E4) hold. Since
\[
0\vee a'=a'\leq a'=a\rightarrow0
\]
we have
\[
a\odot a'=a\odot(0\vee a')\leq a\wedge0=0,
\]
i.e.\ $a\odot a'=0$. If, conversely, $a\odot b=0$ then $a'\leq b$ and hence
\[
a\odot(b\vee a')=a\odot b=0\leq a\wedge0
\]
whence
\[
b=b\vee a'\leq a\rightarrow0=a'
\]
showing $b=a'$. Hence $a\odot b=0$ if and only if $b=a'$. Now the following are equivalent:
\begin{align*}
       a+b & =1, \\
a'\odot b' & =0, \\
         a & =b', \\
         b & =a'.
\end{align*}
This shows (E3). Moreover, the following are equivalent:
\begin{align*}
& a\leq b\text{ holds in }\mathbb E(\mathbf R), \\
& a+b'\text{ is defined}, \\
& a'\odot b\text{ is defined}, \\
& a\leq b\text{ holds in }\mathbf R.
\end{align*}
Since $(C,\vee,\wedge)$ is a lattice and the partial order relations in $\mathbf C$ and $\mathbb E(\mathbf C)$ coincide, $\mathbb E(\mathbf C)$ is a lattice effect algebra.
\end{proof}

Moreover, every lattice effect algebra can be reconstructed from the assigned quasiresiduated lattice as shown in the following result.

\begin{theorem}
Let $\mathbf E$ be a lattice effect algebra. Then $\mathbb E(\mathbb C(\mathbf E))=\mathbf E$.
\end{theorem}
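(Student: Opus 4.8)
The plan is to unwind the two constructions and check, component by component, that $\mathbb E(\mathbb C(\mathbf E))$ agrees with $\mathbf E$. Write $\mathbf E=(E,+,{}',0,1)$ and $\mathbb C(\mathbf E)=(E,\vee,\wedge,\odot,\rightarrow,0,1)$ with $\odot$ and $\rightarrow$ defined as in Theorem~\ref{th1}. Both $\mathbb E(\mathbb C(\mathbf E))$ and $\mathbf E$ are partial algebras with underlying set $E$ and with the same nullary operations $0$ and $1$, so all that must be verified is that the unary operation and the partial addition of $\mathbb E(\mathbb C(\mathbf E))$ coincide (in domain as well as in values) with those of $\mathbf E$.

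For the unary operation, in $\mathbb E(\mathbb C(\mathbf E))$ the complement of $x$ is, by definition, $x\rightarrow0$, and by the formula for $\rightarrow$ in $\mathbb C(\mathbf E)$ together with Lemma~\ref{lem1}(vi) we have $x\rightarrow0=(x\wedge0)+x'=0+x'=x'$. Hence the complement in $\mathbb E(\mathbb C(\mathbf E))$ is the original complement; in particular the notation $x'$ used below is unambiguous.

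For the partial addition, first observe that the lattice order of $\mathbb C(\mathbf E)$ is, by construction, the induced order $\leq$ of $\mathbf E$, and by the preceding theorem the induced order of $\mathbb E(\mathbb C(\mathbf E))$ again coincides with this relation. By the definition of addition in $\mathbb E(\mathbf C)$, the sum of $x$ and $y$ in $\mathbb E(\mathbb C(\mathbf E))$ is defined iff $x\leq y'$, while by Lemma~\ref{lem1}(iii) the sum $x+y$ in $\mathbf E$ is defined iff $x\leq y'$, so the two partial operations have the same domain. Finally, assume $x\leq y'$. Then $x''=x\leq y'$, so $x'\odot y'$ is defined in $\mathbb C(\mathbf E)$, and by the formula for $\odot$ together with Lemma~\ref{lem1}(i) we get $x'\odot y'=(x''+y'')'=(x+y)'$; consequently the sum of $x$ and $y$ in $\mathbb E(\mathbb C(\mathbf E))$ equals $(x'\odot y')'=(x+y)''=x+y$. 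This yields $\mathbb E(\mathbb C(\mathbf E))=\mathbf E$.

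I do not anticipate a genuine obstacle: the argument is pure bookkeeping of the definitions combined with the involution law $a''=a$ and the unit law $0+a=a$ from Lemma~\ref{lem1}. The only point requiring a moment's care is checking that the various order relations occurring in the proof — the induced order of $\mathbf E$, the lattice order of $\mathbb C(\mathbf E)$, and the induced order of $\mathbb E(\mathbb C(\mathbf E))$ — are literally one and the same relation, so that the domain conditions for the partial operations $\odot$ and $+$ line up.
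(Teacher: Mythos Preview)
Your argument is correct and follows essentially the same route as the paper: you verify that the complement in $\mathbb E(\mathbb C(\mathbf E))$ is $x\rightarrow0=(x\wedge0)+x'=x'$, that the domain conditions for the partial sum match via $x\leq y'$, and that the values agree through $(x'\odot y')'=(x''+y'')''=x+y$. The paper's proof is the same computation, only more tersely written and with separate symbols $\oplus,{}^*$ for the recovered operations.
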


\begin{proof}
Let
\begin{align*}
                      \mathbf E & =(E,+,{}',0,1)\text{ with lattice operations }\vee\text{ and }\wedge, \\
           \mathbb C(\mathbf E) & =(E,\vee,\wedge,\odot,\rightarrow,0,1), \\
\mathbb E(\mathbb C(\mathbf E)) & =(E,\oplus,{}^*,0,1)
\end{align*}
and $a,b\in E$. Then
\[
a^*=a\rightarrow0=(a\wedge0)+a'=0+a'=a'.
\]
Moreover, the following are equivalent:
\begin{align*}
& a\oplus b\text{ is defined}, \\
& a\leq b'\text{ in }\mathbb C(\mathbf E), \\
& a\leq b'\text{ in }\mathbf E
\end{align*}
and in this case
\[
a\oplus b=(a^*\odot b^*)^*=(a'\odot b')'=(a''+b'')''=a+b.
\]
\end{proof}

Now we turn our attention to a more general case. The following concept was introduced by Dvure\v censkij and Vetterlein (\cite{DV}).

\begin{definition}
A {\em pseudoeffect algebra} is a partial algebra $\mathbf P=(P,+,\,\bar{}\,,\,\tilde{}\,,0,1)$ of type $(2,1,1,0,0)$ where $(P,\,\bar{}\,,\,\tilde{}\,,0,1)$ is an algebra and $+$ is a partial operation satisfying the following conditions for all $x,y,z\in P$:
\begin{enumerate}[{\rm(P1)}]
\item If $x+y$ is defined then there exist $u,w\in P$ with $u+x=y+w=x+y$,
\item $(x+y)+z$ is defined if and only if $x+(y+z)$ is defined, and in this case $(x+y)+z=x+(y+z)$,
\item $\bar x$ is the unique $u\in P$ with $u+x=1$, and $\tilde x$ is the unique $w\in P$ with $x+w=1$,
\item if $1+x$ or $x+1$ is defined then $x=0$.
\end{enumerate}
The {\em pseudoeffect algebra} $\mathbf P$ is called {\em good} if $\widetilde{\bar x+\bar y}=\overline{\tilde x+\tilde y}$ for all $x,y\in P$ with $\tilde x\leq y$.
\end{definition}

On $P$ a binary relation $\leq$ can be defined by
\[
x\leq y\text{ if there exists some }z\in E\text{ with }x+z=y
\]
($x,y\in P$). Then $(P,\leq,0,1)$ is a bounded poset and $\leq$ is called the {\em induced order} of $\mathbf P$. If $(P,\leq)$ is a lattice then $\mathbf P$ is called a {\em lattice pseudoeffect algebra}.

For our investigation we need the following results taken from \cite{DV}.

\begin{lemma}
If $\mathbf P=(P,+,\,\bar{}\,,\,\tilde{}\,,0,1)$ is a pseudoeffect algebra, $\leq$ its induced order and $a,b,c\in P$ then
\begin{enumerate}[{\rm(i)}]
\item $\bar{\tilde a}=\tilde{\bar a}=a$,
\item the following are equivalent: $a\leq b$, $\bar b\leq\bar a$, $\tilde b\leq\tilde a$,
\item $a+b$ is defined if and only if $a\leq\bar b$,
\item if $a\leq b$ and $b+c$ is defined then $a+c$ is defined and $a+c\leq b+c$,
\item if $a\leq b$ and $c+b$ is defined then $c+a$ is defined and $c+a\leq c+b$,
\item if $a\leq b$ then $a+\widetilde{\bar b+a}=\overline{a+\tilde b}+a=b$,
\item $a+0=0+a=a$,
\item $\bar0=\tilde0=1$ and $\bar1=\tilde1=0$,
\item the following are equivalent: $a\leq b$, there exists some $d\in P$ with $a+d=b$, there exists some $e\in P$ with $e+a=b$.
\end{enumerate}
\end{lemma}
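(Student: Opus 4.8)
The plan is to bootstrap everything from the two identities contained in (P3)---$\bar x+x=1$ and $x+\tilde x=1$---together with the attached uniqueness clauses ($u+x=1$ forces $u=\bar x$, and $x+w=1$ forces $w=\tilde x$) and the axioms (P1), (P2), (P4). First I would dispatch (i): from $a+\tilde a=1$ and uniqueness of the left complement of $\tilde a$ we get $a=\bar{\tilde a}$, and from $\bar a+a=1$ and uniqueness of the right complement of $\bar a$ we get $a=\tilde{\bar a}$. Applying (P4) to $1+\tilde1=1$ and to $\bar1+1=1$ gives $\tilde1=\bar1=0$, hence $1+0=0+1=1$. Now (vii) follows by reassociation: from $(\bar a+a)+0=1+0=1$, (P2) makes $a+0$ defined with $\bar a+(a+0)=1$, so $a+0=a$ by comparison with $\bar a+a=1$; dually, associating $0+(a+\tilde a)=0+1=1$ yields $0+a=a$. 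Then (viii) is immediate: $1+0=1$ and $0+1=1$ together with uniqueness give $\bar0=\tilde0=1$, and $\bar1=\tilde1=0$ was already observed.

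The linchpin is (ix), the assertion that the ``left'' and ``right'' versions of the induced order coincide: if $a+z=b$ then (P1) applied to $a+z$ produces $u$ with $u+a=b$, and conversely (P1) applied to $e+a$ produces $w$ with $a+w=b$. Granting (ix), the monotonicity laws fall out. For (iv), write $b=e+a$; then $b+c=(e+a)+c$, so (P2) forces $a+c$ to be defined with $e+(a+c)=(e+a)+c=b+c$, and (P1) applied to $e+(a+c)=b+c$ extracts $w$ with $(a+c)+w=b+c$, i.e.\ $a+c\leq b+c$. Law (v) is the analogous---in fact slightly easier---computation on the other side, carried out directly in the defining form of $\leq$. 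For (ii): from $a+z=b$ and $b+\tilde b=1$, (P2) gives $a+(z+\tilde b)=1$, hence $z+\tilde b=\tilde a$, so $\tilde b\leq\tilde a$ by (ix); dually, from $b=e+a$ and $\bar b+b=1$, (P2) gives $(\bar b+e)+a=1$, hence $\bar b+e=\bar a$, so $\bar b\leq\bar a$. The reverse implications follow by applying these two facts again to the complemented elements and cancelling the resulting double complements via (i).

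For (iii): if $a+b$ is defined, put $c=a+b$; from $\bar c+(a+b)=\bar c+c=1$, (P2) gives $(\bar c+a)+b=1$, so $\bar c+a=\bar b$ by uniqueness, whence $a\leq\bar b$ by (ix). Conversely, if $a\leq\bar b$, then since $\bar b+b=1$ is defined, (iv) forces $a+b$ to be defined. Finally (vi): from $a\leq b$ we obtain $\bar b\leq\bar a$ by (ii), so $\bar b+a$ is defined by (iii); then $(\bar b+a)+\widetilde{\bar b+a}=1$ reassociates by (P2) to $\bar b+\bigl(a+\widetilde{\bar b+a}\bigr)=1$, whence $a+\widetilde{\bar b+a}=\tilde{\bar b}=b$ by (i) and uniqueness. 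The identity $\overline{a+\tilde b}+a=b$ is the mirror computation: $a+\tilde b$ is defined because $a\leq b=\overline{\tilde b}$ by (i) and (iii), and $\overline{a+\tilde b}+(a+\tilde b)=1$ reassociates to $\bigl(\overline{a+\tilde b}+a\bigr)+\tilde b=1$, giving $\overline{a+\tilde b}+a=\overline{\tilde b}=b$.

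The only point that genuinely needs care is the logical ordering of the items, which is forced by non-commutativity: $a+b$ being defined and $b+a$ being defined are inequivalent conditions, and the two complements $\bar{}\,$ and $\tilde{}\,$ have to be tracked separately throughout. Concretely, (ix) must precede the monotonicity laws and (ii), while (iii) uses both (ix) and (iv); attempting (iii) before (ix) stalls, since ``$a+b$ defined'' yields only a summand on the \emph{left} of $\bar b$. Once this dependency order is respected, each item reduces to a single reassociation via (P1)/(P2) followed by an appeal to the uniqueness in (P3), so I do not expect a deeper obstacle.
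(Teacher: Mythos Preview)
Your argument is correct. Each item is handled by exactly the mechanism you describe: reassociation via (P2), the commutation witnesses from (P1), and the uniqueness clauses in (P3), with (i) used to undo double complements. The dependency order you identify---(i), then (vii)/(viii), then (ix), then (iv)/(v), then (ii), then (iii), then (vi)---is the right one; in particular your observation that (iii) must wait for (ix) and (iv) is the key organizational point.

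The comparison with the paper is brief: the paper does not prove this lemma at all. It is stated as a collection of ``results taken from \cite{DV}'' (Dvure\v censkij and Vetterlein), with no argument supplied. So you have written out in full what the paper treats as background imported from the literature. What your write-up buys is a self-contained exposition that makes the logical dependencies among (i)--(ix) explicit; what the paper's approach buys is economy, since these facts are standard once pseudoeffect algebras are on the table.
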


Since pseudoeffect algebras are more general than effect algebras, we must define qua\-si\-re\-si\-dua\-ted lattice for the case when the partial operation $\odot$ is not commutative and the mapping $x\mapsto\bar x$ is not an involution.

\begin{definition}
A {\em quasiresiduated lattice} is a partial algebra $\mathbf Q=(Q,\vee,\wedge,\odot,\rightarrow,\leadsto,0,1)$ of type $(2,2,2,2,2,0,0)$ where 
$(Q,\vee,\wedge,0,1)$ is a bounded lattice, $\odot$ is a partial and $\rightarrow$ and $\leadsto$ are full operations satisfying the following conditions for all $x,y,z\in Q$:
\begin{enumerate}[{\rm(Q1)}]
\item $(Q,\odot,1)$ is a partial monoid where $x\odot y$ is defined if and only if $\tilde x\leq y$,
\item $\tilde{\bar x}=\bar{\tilde x}=x$, and $x\leq y$ implies $\bar y\leq\bar x$ and $\tilde y\leq\tilde x$,
\item $(x\vee\bar y)\odot y\leq y\wedge z$ if and only if $x\vee\bar y\leq y\rightarrow z$,
\item $y\odot(x\vee\tilde y)\leq y\wedge z$ if and only if $x\vee\tilde y\leq y\leadsto z$,
\item $\widetilde{\bar x\odot\bar y}=\overline{\tilde x\odot\tilde y}$.
\end{enumerate}
Here $\bar x$ and $\tilde x$ are abbreviations for $x\rightarrow0$ and $x\leadsto0$, respectively. The quasiresiduated lattice $\mathbf Q$ is called {\em divisible} if
\[
x\leq y\text{ implies }(y\rightarrow x)\odot y=y\odot(y\leadsto x)=x
\]
for all $x,y\in Q$.
\end{definition}

Note that the terms in (Q3) and (Q4) are everywhere defined.

In case $\bar y\leq x$ and $z\leq y$ condition (Q3) reduces to
\[
x\odot y\leq z\text{ if and only if }x\leq y\rightarrow z
\]
which is usual adjointness. Analogously, in case $\tilde y\leq x$ and $z\leq y$ condition (Q4) reduces to
\[
y\odot x\leq z\text{ if and only if }x\leq y\rightarrow z
\]
which is usual adjointness if $\odot$ is commutative. Therefore conditions (Q3) and (Q4) will be called {\em quasiadjointness}. Hence, contrary to the similar concept in \cite{CH}, in quasiadjointness we have only everywhere defined terms in $\mathbf Q$ although $\mathbf Q$ is a partial algebra.

Similarly as for effect algebras, we prove that every good lattice pseudoeffect algebra can be organized into a quasiresiduated lattice which, however, need not be commutative.

\begin{theorem}
Let $\mathbf P=(P,+,\,\bar{}\,,\,\tilde{}\,,0,1)$ be a good lattice pseudoeffect algebra with lattice operations $\vee$ and $\wedge$ and put
\begin{align*}
      x\odot y & :=\widetilde{\bar x+\bar y}\text{ if and only if }\tilde x\leq y, \\
x\rightarrow y & :=\bar x+(x\wedge y),\;x\leadsto y:=(x\wedge y)+\tilde x
\end{align*}
{\rm(}$x,y\in P${\rm)}. Then $\mathbb Q(\mathbf P):=(P,\vee,\wedge,\odot,\rightarrow,\leadsto,0,1)$ is a divisible quasiresiduated lattice.
\end{theorem}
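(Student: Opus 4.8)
The plan is to follow the proof of Theorem~\ref{th1} step by step, adapting it to the non-commutative setting: now $+$ need not be commutative and the maps $\bar{}$, $\tilde{}$ are mutually inverse bijections (by $\bar{\tilde a}=\tilde{\bar a}=a$) rather than a single involution, so one must keep careful track of which complement occurs and in which argument of $+$. Before checking (Q1)--(Q5) I would record two auxiliary facts. First, a \emph{definedness translation}: for $a,b\in P$ the sum $\bar a+\bar b$ is defined iff $\tilde a\le b$ iff (by the cited lemma, parts (ii),(iii)) $a\odot b$ is defined; similarly $\tilde a+\tilde b$ is defined iff $a+b$ is defined. Second, a \emph{complementation principle}, immediate from (P2) and (P3): if $u+v=w$ then $\bar w+u=\bar v$, and if $v+u=w$ then $u+\tilde w=\tilde v$ (for instance $(\bar w+u)+v=\bar w+w=1$, and $\bar w+u$ is defined by (P2), so uniqueness forces $\bar w+u=\bar v$).

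With these in hand, (Q1) and (Q2) are routine. The domain of $\odot$ is correct by definition; $x\odot1=\widetilde{\bar x+\bar1}=\widetilde{\bar x}=x$ and $1\odot x=x$ follow from $\bar1=0$, $a+0=a$ and $\tilde{\bar a}=a$; and associativity of $\odot$ reduces to (P2), since from $\overline{x\odot y}=\overline{\widetilde{\bar x+\bar y}}=\bar x+\bar y$ one sees (via the definedness translation) that $(x\odot y)\odot z$ is defined exactly when $(\bar x+\bar y)+\bar z$ is, and then equals $\widetilde{(\bar x+\bar y)+\bar z}$, with the symmetric statement for $x\odot(y\odot z)$. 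Condition (Q2) is simply parts (i) and (ii) of the cited lemma.

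For the quasiadjointness conditions (Q3) and (Q4) I would prove each implication separately. For (Q3): part (vi) of the lemma, in the form $\bar b+\widetilde{\overline{a\vee\bar b}+\bar b}=a\vee\bar b$, identifies $a\vee\bar b=\bar b+\bigl((a\vee\bar b)\odot b\bigr)$, so the forward implication follows by monotonicity of $+$ in its second argument (part (v)); for the converse, applying $\bar{}$ to the hypothesis and using the complementation principle to obtain $\overline{b\wedge c}=\overline{\bar b+(b\wedge c)}+\bar b$, together with monotonicity in the first argument (part (iv)), gives $\overline{b\wedge c}\le\overline{a\vee\bar b}+\bar b=\overline{(a\vee\bar b)\odot b}$, which by part (ii) is what is wanted. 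Condition (Q4) is the left--right mirror image of (Q3) and is argued analogously, now using the second halves of parts (v)/(vi) and of the complementation principle; in the forward implication one first checks $n+\tilde y=x\vee\tilde y$ for $n:=y\odot(x\vee\tilde y)$ by applying the mirror complementation principle twice, starting from $\bar n=\bar y+\overline{x\vee\tilde y}$.

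Finally, (Q5) and divisibility. On one side $\tilde x\odot\tilde y=\widetilde{\overline{\tilde x}+\overline{\tilde y}}=\widetilde{x+y}$ by $\overline{\tilde a}=a$, so $\overline{\tilde x\odot\tilde y}=x+y$; on the other $\bar x\odot\bar y=\widetilde{\overline{\bar x}+\overline{\bar y}}$, and here the \emph{good} hypothesis, applied to $\bar x,\bar y$ (whose right complements are $x,y$, and which satisfy $\tilde{\bar x}=x\le\bar y$ precisely when $\bar x\odot\bar y$ is defined), turns this into $\overline{x+y}$, whence $\widetilde{\bar x\odot\bar y}=x+y=\overline{\tilde x\odot\tilde y}$; the definedness translation shows both products are defined exactly when $x+y$ is. For divisibility, if $x\le y$ then $y\rightarrow x=\bar y+x$ and $\overline{\bar y+x}+\bar y=\bar x$ by the complementation principle, so $(y\rightarrow x)\odot y=\widetilde{\overline{\bar y+x}+\bar y}=\widetilde{\bar x}=x$; dually $y\leadsto x=x+\tilde y$ and, since $\overline{x+\tilde y}+x=y$, one gets $\bar y+\overline{x+\tilde y}=\bar x$, so $y\odot(y\leadsto x)=\widetilde{\bar y+\overline{x+\tilde y}}=\widetilde{\bar x}=x$. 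The main obstacle is the bookkeeping in (Q3) and (Q4): ensuring every monotonicity step points the right way and every complement lands in the correct slot of the non-commutative $+$. It is worth noting that (Q5) is the only place where goodness of $\mathbf P$ is genuinely used; the remaining conditions hold for any lattice pseudoeffect algebra.
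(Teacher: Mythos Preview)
Your proposal is correct and follows essentially the same route as the paper: use identity (vi) of the lemma for the forward implication in (Q3)/(Q4), and a cancellation identity (what you call the complementation principle) for the backward implication and for divisibility. Your write-up is simply more explicit where the paper says ``obviously'' for (Q1), (Q2) and (Q5).

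There is one small but genuine difference worth noting. For the second divisibility identity the paper computes $b\odot(b\leadsto a)=\overline{\tilde b+\widetilde{a+\tilde b}}=\bar{\tilde a}=a$; the first equality here is not the definition of $\odot$ but already uses goodness (to pass from $\widetilde{\bar b+\overline{a+\tilde b}}$ to $\overline{\tilde b+\widetilde{a+\tilde b}}$). You instead stay with the definition and apply the complementation principle twice to obtain $\bar b+\overline{a+\tilde b}=\bar a$ directly. Hence your remark that goodness is needed only for (Q5) is a slight sharpening of the paper's argument; the paper's shortcut is a line shorter but spends the ``good'' hypothesis a second time.
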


\begin{proof}
Let $a,b,c\in E$. Obviously, $(P,\vee,\wedge,0,1)$ is a bounded lattice and (Q1), (Q2) and (Q5) hold. If $(a\vee\bar b)\odot b\leq b\wedge c$ then $\widetilde{\overline{a\vee\bar b}+\bar b}\leq b\wedge c$ and $\bar b\leq a\vee\bar b$ and hence
\[
a\vee\bar b=\bar b+\widetilde{\overline{a\vee\bar b}+\bar b}\leq\bar b+(b\wedge c)=b\rightarrow c.
\]
If, conversely, $a\vee\bar b\leq b\rightarrow c$ then $a\vee\bar b\leq\bar b+(b\wedge c)$ and $\bar b\leq\overline{b\wedge c}$ and hence
\[
(a\vee\bar b)\odot b=\widetilde{\overline{a\vee\bar b}+\bar b}\leq\widetilde{\overline{\bar b+(b\wedge c)}+\bar b}=\widetilde{\overline{b\wedge c}}=b\wedge c
\]
roving (Q3). If $b\odot(a\vee\tilde b)\leq b\wedge c$ then $\overline{\tilde b+\widetilde{a\vee\tilde b}}\leq b\wedge c$ and $\tilde b\leq a\vee\tilde b$ and hence
\[
a\vee\tilde b=\overline{\tilde b+\widetilde{a\vee\tilde b}}+\tilde b\leq(b\wedge c)+\tilde b=b\leadsto c.
\]
If, conversely, $a\vee\tilde b\leq b\leadsto c$ then $a\vee\tilde b\leq(b\wedge c)+\tilde b$ and $\tilde b\leq\widetilde{b\wedge c}$ and hence
\[
b\odot(a\vee\tilde b)=\overline{\tilde b+\widetilde{a\vee\tilde b}}\leq\overline{\tilde b+\widetilde{(b\wedge c)+\tilde b}}=\overline{\widetilde{b\wedge c}}=b\wedge c
\]
proving (Q4). If $a\leq b$ then $\bar b\leq\bar a$ and $\tilde b\leq\tilde a$ and hence
\begin{align*}
(b\rightarrow a)\odot b & =\widetilde{\mathit{\overline{\bar b+a}+\bar b}}=\tilde{\bar a}=a, \\
    b\odot(b\leadsto a) & =\overline{\tilde b+\widetilde{a+\tilde b}}=\bar{\tilde a}=a
\end{align*}
proving divisibility.
\end{proof}

We can prove also the converse.

\begin{theorem}
Let $\mathbf Q=(Q,\vee,\wedge,\odot,\rightarrow,\leadsto,0,1)$ be a quasiresiduated lattice and put
\begin{align*}
     x+y & :=\widetilde{\bar x\odot\bar y}\text{ if and only if }x\leq\bar y, \\
  \bar x & :=x\rightarrow0,\;\tilde x:=x\leadsto0
\end{align*}
{\rm(}$x,y\in Q${\rm)}. Then $\mathbb P(\mathbf Q):=(Q,+,\,\bar{}\,,\,\tilde{}\,,0,1)$ is a good lattice pseudoeffect algebra whose order coincides with that in $\mathbf Q$.
\end{theorem}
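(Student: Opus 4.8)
The plan is to mimic the proof of the effect-algebra converse almost line by line, the one genuinely new ingredient being axiom~(P1), which there is free because $+$ is commutative. First I would record the arithmetic of $\bar{}$ and $\tilde{}$ in $\mathbf Q$: by~(Q2) the maps $x\mapsto\bar x$ and $x\mapsto\tilde x$ are mutually inverse order-reversing bijections of $Q$; substituting $1$ into (Q3) and (Q4) gives $\bar1=\tilde1=0$, hence $\bar0=\tilde0=1$, and also $y\rightarrow1=y\leadsto1=1$, and feeding these back into (Q3) and (Q4) with $z=1$ yields $p\odot q\leq p\wedge q$ whenever $\tilde p\leq q$. Exactly as in the effect-algebra proof, taking $z=0$ in (Q3) and (Q4) gives $\bar a\odot a=0$ and $a\odot\tilde a=0$, and conversely: whenever $b\odot a$ is defined, $b\odot a=0$ iff $b=\bar a$, and whenever $a\odot b$ is defined, $a\odot b=0$ iff $b=\tilde a$. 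Finally, the definition of $+$ and~(Q5) give the two book-keeping identities $\overline{x+y}=\bar x\odot\bar y$ and $\widetilde{x+y}=\tilde x\odot\tilde y$, valid exactly when $x\leq\bar y$.

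Next I would check the pseudoeffect-algebra axioms for $\mathbb P(\mathbf Q)$. Axiom~(P4) is immediate: $1+x$ defined forces $\bar x=1=\bar0$, hence $x=0$, and $x+1$ defined forces $x\leq\bar1=0$. Axiom~(P3) is read off from $u+x=\widetilde{\bar u\odot\bar x}$ together with ``$\widetilde z=1$ iff $z=0$'' and the annihilator description: $u+x=1$ iff $\bar u\odot\bar x=0$ iff $u=\bar x$, and symmetrically $x+w=1$ iff $w=\tilde x$, which is exactly the two uniqueness clauses. Axiom~(P2) is a routine unwinding via $\overline{x+y}=\bar x\odot\bar y$: one checks that $(x+y)+z$ is defined precisely when the iterated product $(\bar x\odot\bar y)\odot\bar z$ is, and in that case equals $\widetilde{(\bar x\odot\bar y)\odot\bar z}$, and likewise $x+(y+z)=\widetilde{\bar x\odot(\bar y\odot\bar z)}$, so associativity of the partial monoid $(Q,\odot,1)$ from~(Q1) settles both the definedness equivalence and the equation.

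Axiom~(P1) is the heart of the matter, and the step I expect to be the main obstacle, since the two witnesses must be produced by hand. From $p\odot q\leq p\wedge q$ one first gets $x\leq x+y$ and $y\leq x+y$, i.e.\ $\bar x\odot\bar y\leq\bar x$ and $\bar x\odot\bar y\leq\bar y$. Applying divisibility to the pair $\bar x\odot\bar y\leq\bar x$ gives $\bigl(\bar x\rightarrow(\bar x\odot\bar y)\bigr)\odot\bar x=\bar x\odot\bar y$, so I would put $u:=\widetilde{\bar x\rightarrow(\bar x\odot\bar y)}$; then $\bar u\odot\bar x=\bar x\odot\bar y$, whence $u+x=\widetilde{\bar u\odot\bar x}=x+y$, once one has checked the side-condition $u\leq\bar x$ needed for $u+x$ to be defined, which follows from $\overline{\bar x}\odot\bar x=0\leq\bar x\odot\bar y$ via~(Q3). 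Symmetrically, using $\bar x\odot\bar y\leq\bar y$ and the $\leadsto$-half of divisibility, $w:=\widetilde{\bar y\leadsto(\bar x\odot\bar y)}$ satisfies $\bar y\odot\bar w=\bar x\odot\bar y$, hence $y+w=x+y$, with the definedness $y\leq\bar w$ checked the same way from $\bar y\odot y=0$ and~(Q4). Thus (P1) holds; the whole weight of the argument is in locating $u$ and $w$ and in discharging these definedness side-conditions, which is exactly where divisibility and quasiadjointness (Q3),(Q4) enter.

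With (P1)--(P4) established, $\mathbb P(\mathbf Q)$ is a pseudoeffect algebra, so the lemma quoted above applies to it. Its clause~(iii) reads ``$a+c$ is defined iff $a\leq\bar c$'' in the induced order of $\mathbb P(\mathbf Q)$, while by construction ``$a+c$ is defined iff $a\leq\bar c$'' holds in $\mathbf Q$; since $c\mapsto\bar c$ is onto, the two orders coincide, so $\mathbb P(\mathbf Q)$ carries the lattice $(Q,\vee,\wedge)$ and is a lattice pseudoeffect algebra. Goodness is a short computation: for $\tilde x\leq y$, the definition of $+$ and~(Q5) applied to $\bar x,\bar y$ (using that $\bar{}$ and $\tilde{}$ are mutually inverse) give $\bar x+\bar y=\overline{x\odot y}$ and $\tilde x+\tilde y=\widetilde{x\odot y}$, so $\widetilde{\bar x+\bar y}=x\odot y=\overline{\tilde x+\tilde y}$.
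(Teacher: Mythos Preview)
Your argument is sound everywhere except at the one place you yourself flag as ``the heart of the matter'': your construction of the witnesses $u,w$ for (P1) invokes \emph{divisibility}, but the theorem is stated for an arbitrary quasiresiduated lattice, and divisibility is an extra hypothesis that is not assumed. Concretely, from $\bar x\odot\bar y\leq\bar x$ you pass to $(\bar x\rightarrow(\bar x\odot\bar y))\odot\bar x=\bar x\odot\bar y$; without divisibility, quasiadjointness (Q3) only gives you the inequality $(\bar x\rightarrow(\bar x\odot\bar y))\odot\bar x\leq\bar x\odot\bar y$, which is not enough to conclude $u+x=x+y$. The symmetric construction of $w$ via (Q4) has the same defect.

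The paper avoids divisibility entirely by proving (P1) \emph{after} (P2) and (P3), and then bootstrapping from those. From $p\odot q\leq p\wedge q$ (which you correctly derive) one gets $a\leq a+b$ and $b\leq a+b$ whenever $a+b$ is defined. Hence $a+\widetilde{a+b}$ is defined; setting $u:=\overline{a+\widetilde{a+b}}$, (P3) gives $u+(a+\widetilde{a+b})=1$, and then (P2) and (P3) again yield $(u+a)+\widetilde{a+b}=1$, i.e.\ $u+a=\overline{\widetilde{a+b}}=a+b$. Dually, $w:=\widetilde{\overline{a+b}+b}$ satisfies $b+w=a+b$. This produces the (P1) witnesses using only (Q1)--(Q5). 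Your treatment of (P2), (P3), (P4), the coincidence of orders, and goodness (which the paper in fact omits to check) is fine; only the (P1) step needs to be redone along these lines.
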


\begin{proof}
Let $a,b,c\in Q$. It is easy to see that (E2) and (E4) hold. Since
\[
0\vee\tilde a=\tilde a\leq\tilde a=a\leadsto0
\]
we have
\[
a\odot\tilde a=a\odot(0\vee\tilde a)\leq a\wedge0=0,
\]
i.e.\ $a\odot\tilde a=0$. If, conversely, $a\odot b=0$ then $\tilde a\leq b$ and hence
\[
a\odot(b\vee\tilde a)=a\odot b=0\leq a\wedge0
\]
whence
\[
b=b\vee\tilde a\leq a\leadsto0=\tilde a
\]
showing $b=\tilde a$. Hence $a\odot b=0$ if and only if $b=\tilde a$. Since
\[
0\vee\bar a=\bar a\leq\bar a=a\rightarrow0
\]
we have
\[
\bar a\odot a=(0\vee\bar a)\odot a\leq a\wedge0=0,
\]
i.e.\ $\bar a\odot a=0$. If, conversely, $b\odot a=0$ then $\tilde b\leq a$, i.e.\ $\bar a\leq b$, and hence
\[
(b\vee\bar a)\odot a=b\odot a=0\leq a\wedge0
\]
whence
\[
b=b\vee\bar a\leq a\rightarrow0=\bar a
\]
showing $b=\bar a$. Hence $b\odot a=0$ if and only if $b=\bar a$. Now the following are equivalent:
\begin{align*}
              a+b & =1, \\
\bar a\odot\bar b & =0, \\
                a & =\bar b, \\
                b & =\tilde a.
\end{align*}
This shows (P3). Since
\[
a\odot(1\vee\tilde a)=a\odot1=a\leq a\wedge a
\]
we have
\[
1=1\vee\tilde a\leq a\leadsto a,
\]
i.e.\ $a\leadsto a=1$. If $a\leq\bar b$ then because of $\bar b\vee a\leq\bar a\leadsto\bar a$ we have
\[
\bar a\odot\bar b=\bar a\odot(\bar b\vee a)\leq\bar a\wedge\bar a=\bar a
\]
whence
\[
a=\tilde{\bar a}\leq\widetilde{\bar a\odot\bar b}=a+b
\]
showing that $a+\widetilde{a+b}$ is defined. Now in case $a\leq\bar b$ the following are equivalent:
\begin{align*}
                    c & =\overline{a+\widetilde{a+b}}, \\
c+(a+\widetilde{a+b}) & =1, \\
(c+a)+\widetilde{a+b} & =1, \\
                  c+a & =a+b.
\end{align*}
Since
\[
(1\vee\bar a)\odot a=1\odot a=a\leq a\wedge a
\]
we have
\[
1=1\vee\bar a\leq a\rightarrow a,
\]
i.e.\ $a\rightarrow a=1$. If $b\leq\tilde a$ then because of $\tilde a\vee b\leq\tilde b\rightarrow\tilde b$ we have
\[
\tilde a\odot\tilde b=(\tilde a\vee b)\odot\tilde b\leq\tilde b\wedge\tilde b=\tilde b
\]
whence
\[
b=\bar{\tilde b}\leq\overline{\tilde a\odot\tilde b}=\widetilde{\bar a\odot\bar b}=a+b
\]
showing that $\overline{a+b}+b$ is defined. Now in case $b\leq\tilde a$, i.e.\ $a\leq\bar b$ the following are equivalent:
\begin{align*}
                   c & =\widetilde{\overline{a+b}+b}, \\
(\overline{a+b}+b)+c & =1, \\
\overline{a+b}+(b+c) & =1, \\
                 b+c & =a+b.
\end{align*}
This shows (P1). Now the following are equivalent:
\begin{align*}
& a\leq b\text{ holds in }\mathbb P(\mathbf Q), \\
& a+\tilde b\text{ is defined}, \\
& \bar a\odot b\text{ is defined}, \\
& a\leq b\text{ holds in }\mathbf Q.
\end{align*}
Since $(Q,\vee,\wedge)$ is a lattice and the partial order relations in $\mathbf Q$ and $\mathbb P(\mathbf Q)$ coincide, $\mathbb P(\mathbf Q)$ is a lattice pseudoeffect algebra.
\end{proof}

As in the case of effect algebras, also every good lattice pseudoeffect algebra can be reconstructed from its assigned quasiresiduated lattice.

\begin{theorem}
Let $\mathbf P$ be a good lattice pseudoeffect algebra. Then $\mathbb P(\mathbb Q(\mathbf P))=\mathbf P$.
\end{theorem}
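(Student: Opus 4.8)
The plan is to mimic the proof of $\mathbb E(\mathbb C(\mathbf E))=\mathbf E$, now carrying along the two unary operations. Write $\mathbf P=(P,+,\,\bar{}\,,\,\tilde{}\,,0,1)$ with lattice operations $\vee,\wedge$, set $\mathbb Q(\mathbf P)=(P,\vee,\wedge,\odot,\rightarrow,\leadsto,0,1)$ and $\mathbb P(\mathbb Q(\mathbf P))=(P,\oplus,{}^\sharp,{}^\flat,0,1)$, and fix $a,b\in P$. Since the bounded-lattice reduct of $\mathbb Q(\mathbf P)$ is by construction that of $\mathbf P$, the two orders coincide, and the constants $0,1$ are unchanged; so it suffices to identify ${}^\sharp$ with $\bar{}$, ${}^\flat$ with $\tilde{}$, and $\oplus$ with $+$ as partial operations. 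For the unary operations, $a^\sharp=a\rightarrow0=\bar a+(a\wedge0)=\bar a+0=\bar a$ and $a^\flat=a\leadsto0=(a\wedge0)+\tilde a=0+\tilde a=\tilde a$, using $a\wedge0=0$ and $c+0=0+c=c$ from the pseudoeffect-algebra lemma.

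Next I would match the domains: by the definition of $\mathbb P(\cdot)$, $a\oplus b$ is defined iff $a\le b^\sharp=\bar b$ (in $\mathbb Q(\mathbf P)$, equivalently in $\mathbf P$), and by the pseudoeffect-algebra lemma this is exactly the condition for $a+b$ to be defined. Assuming now $a\le\bar b$, I unwind the definition of $\oplus$. Since the derived bar and tilde of $\mathbb Q(\mathbf P)$ coincide with those of $\mathbf P$, we get $a\oplus b=\widetilde{\,\bar a\odot\bar b\,}$, and the product $\bar a\odot\bar b$ is legitimate because $\widetilde{\bar a}=a\le\bar b$; expanding it by the definition of $\odot$ in $\mathbb Q(\mathbf P)$ yields $\bar a\odot\bar b=\widetilde{\,\overline{\bar a}+\overline{\bar b}\,}$, where all bars and tildes are now those of $\mathbf P$.

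The only nonroutine step is to collapse this nested expression, and goodness is exactly what does it: the pair $\bar a,\bar b$ satisfies the hypothesis $\widetilde{\bar a}\le\bar b$ of goodness (this is precisely our condition $a\le\bar b$), so $\widetilde{\,\overline{\bar a}+\overline{\bar b}\,}=\overline{\,\widetilde{\bar a}+\widetilde{\bar b}\,}=\overline{a+b}$, and therefore $a\oplus b=\widetilde{\,\bar a\odot\bar b\,}=\widetilde{\,\overline{a+b}\,}=a+b$ by $\widetilde{\bar c}=c$. This is the place where, compared with the effect-algebra case in which $(a''+b'')''=a+b$ followed instantly from involutivity of ${}'$, the weaker axiomatics of pseudoeffect algebras forces us to invoke the ``good'' property — and the key observation is that the hypothesis of goodness coincides with the definedness condition of $\oplus$. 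Collecting the three identifications (the two unary operations, the partial addition, the constants), we conclude $\mathbb P(\mathbb Q(\mathbf P))=\mathbf P$.
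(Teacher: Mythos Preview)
Your proof is correct and follows essentially the same route as the paper: identify the unary operations via $a^\sharp=a\rightarrow0=\bar a$ and $a^\flat=a\leadsto0=\tilde a$, match the domains of $\oplus$ and $+$ through $a\le\bar b$, and then unwind $a\oplus b=\widetilde{\bar a\odot\bar b}$ using goodness to get $a+b$. The only cosmetic difference is that the paper phrases the key step as the identity (Q5) of $\mathbb Q(\mathbf P)$, writing $\widetilde{\bar a\odot\bar b}=\overline{\tilde a\odot\tilde b}=\overline{\widetilde{a+b}}=a+b$, whereas you apply goodness of $\mathbf P$ directly to the pair $(\bar a,\bar b)$; since (Q5) in $\mathbb Q(\mathbf P)$ is nothing but goodness, the two computations are the same.
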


\begin{proof}
Let
\begin{align*}
                      \mathbf P & =(P,+,\,\bar{}\,,\,\tilde{}\,,0,1)\text{ with lattice operations }\vee\text{ and }\wedge, \\
           \mathbb Q(\mathbf P) & =(P,\vee,\wedge,\odot,\rightarrow,\leadsto,0,1), \\
\mathbb P(\mathbb Q(\mathbf P)) & =(P,\oplus,{}^*,{}^+,0,1)
\end{align*}
and $a,b\in E$. Then
\begin{align*}
a^* & =a\rightarrow0=\bar a+(a\wedge0)=\bar a+0=\bar a, \\
a^+ & =a\leadsto0=(a\wedge0)+\tilde a=0+\tilde a=\tilde a.
\end{align*}
Moreover, the following are equivalent:
\begin{align*}
& a\oplus b\text{ is defined}, \\
& a\leq\bar b\text{ in }\mathbb Q(\mathbf P), \\
& a\leq\bar b\text{ in }\mathbf P
\end{align*}
and in this case
\[
a\oplus b=(a^*\odot b^*)^+=\widetilde{\bar a\odot\bar b}=\overline{\tilde a\odot\tilde b}=a+b.
\]
\end{proof}

Authors' addresses:

Ivan Chajda \\
Palack\'y University Olomouc \\
Faculty of Science \\
Department of Algebra and Geometry \\
17.\ listopadu 12 \\
771 46 Olomouc \\
Czech Republic \\
ivan.chajda@upol.cz

Helmut L\"anger \\
TU Wien \\
Faculty of Mathematics and Geoinformation \\
Institute of Discrete Mathematics and Geometry \\
Wiedner Hauptstra\ss e 8-10 \\
1040 Vienna \\
Austria, and \\
Palack\'y University Olomouc \\
Faculty of Science \\
Department of Algebra and Geometry \\
17.\ listopadu 12 \\
771 46 Olomouc \\
Czech Republic \\
helmut.laenger@tuwien.ac.at
\end{document}